\documentclass{amsart}

\numberwithin{equation}{section}

\newtheorem{theorem}{Theorem}[section]
\newtheorem{lemma}[theorem]{Lemma}
\newtheorem{prop}[theorem]{Proposition}
\theoremstyle{definition}
\newtheorem{remark}[theorem]{Remark}










\usepackage{amsmath, amsthm}
\usepackage{textcomp}
\usepackage{amscd}
\usepackage{amsfonts}
\usepackage{amssymb}
\usepackage{tikz-cd}
\usepackage{mathabx}
\usepackage{mathtools}
\usepackage{graphicx}
\usepackage{hyperref}
\usepackage{epigraph}
\usepackage{underscore}
\usepackage{color}
\usepackage{mathdots}
\usepackage[utf8]{inputenc}

\DeclareMathOperator{\tr}{tr}

\newcommand{\nc}{\newcommand}
\nc{\on}{\operatorname}
\nc{\ul}{\underline}
\nc{\Gr}{{\on{Gr}}}
\nc{\CM}{{\mathcal M}}
\nc{\CN}{{\mathcal N}}
\nc{\CS}{{\mathcal S}}
\nc{\bN}{{\mathbf N}}
\nc{\BA}{{\mathbb A}}
\nc{\BC}{{\mathbb C}}
\nc{\BN}{{\mathbb N}}
\nc{\BP}{{\mathbb P}}
\nc{\BS}{{\mathbb S}}
\nc{\BW}{{\mathbb W}}
\nc{\fsp}{{\mathfrak{sp}}}
\nc{\fgl}{{\mathfrak{gl}}}
\nc{\fh}{{\mathfrak{h}}}
\nc{\fS}{{\mathfrak{S}}}
\nc{\se}{{\mathsf e}}
\nc{\ssf}{{\mathsf f}}
\nc{\sh}{{\mathsf h}}
\nc{\sx}{{\mathsf x}}
\nc{\sy}{{\mathsf y}}
\nc{\sw}{{\mathsf w}}

\usepackage{tikz}
\usetikzlibrary{external}
\usepackage{nicematrix}

\NiceMatrixOptions{transparent}

\usepackage[left=2cm,right=2cm,
    top=2cm,bottom=2cm,bindingoffset=0cm]{geometry}

\begin{document}

\title{Coulomb branch of a multiloop quiver gauge theory}

\author{Michael Finkelberg}
\address{M.F.:
  National Research University Higher School of Economics, Russian Federation,
  Department of Mathematics, 6 Usacheva st., Moscow 119048;
  Skolkovo Institute of Science and Technology;
  Institute for Information Transmission Problems}
 \email{fnklberg@gmail.com}
\author{Evgeny Goncharov}
\address{E.G.:
  National Research University Higher School of Economics, Russian Federation,
  Department of Mathematics, 6 Usacheva st, Moscow;
  Cambridge University, Center for Mathematical Sciences, Wilberforce Road, Cambridge, UK}
\email{eagoncharov@edu.hse.ru, eg555@cam.ac.uk}

\begin{abstract}
  We compute the Coulomb branch of a multiloop quiver gauge theory for the quiver with a single
  vertex, $r$ loops, one-dimensional framing, and $\dim V=2$. We identify it with a Slodowy
  slice in the nilpotent cone of the symplectic Lie algebra of rank $r$.
  Hence it possesses a symplectic resolution
  with $2r$ fixed points with respect to a Hamiltonian torus action. We also idenfity its
  flavor deformation with a base change of the full Slodowy slice.
\end{abstract}

\maketitle

\section{Introduction}

\subsection{}
We consider a multiloop analogue of the Jordan quiver having $r$ loops at a single vertex
instead of one. The corresponding quiver variety (the Higgs branch $\CM_H(r,2,1)$ of
the corresponding
quiver gauge theory) in the simplest case when the framing $W$ is 1-dimensional,
and $V$ is 2-dimensional, is an affine conical Poisson variety of dimension $8r-4$.
It possesses a symplectic resolution (a GIT quotient) $\widetilde\CM_H(r,2,1)\to\CM_H(r,2,1)$.
Contrary to the well-studied case of Nakajima quiver varieties for quivers without loop-edges,
when the symplectic resolutions are semismall, the resolution
$\widetilde\CM_H(r,2,1)\to\CM_H(r,2,1)$ is {\em small}: the central fiber is isomorphic
to $\BP^{2r-1}$. In particular, it has $2r$ fixed points with respect to a natural Hamiltonian
$\BC^\times$-action.

According to general expectations, it means that the corresponding Coulomb branch
$\CM_C(r,2,1)$ should also possess a symplectic resolution
$\widetilde\CM_C(r,2,1)\to\CM_C(r,2,1)$ with $2r$ fixed points with respect to a natural
Hamiltonian $\BC^\times$-action. In this note we prove that this is indeed true. Namely,
we identify $\CM_C(r,2,1)$ with the Slodowy slice $\CS(2r-2,1,1)$ in the nilpotent cone
$\CN_{\fsp(2r)}$ of the Lie algebra $\fsp(2r)$ to the adjoint orbit of a nilpotent element $e$
of Jordan type $(2r-2,1,1)$. Hence $\widetilde\CM_C(r,2,1)\to\CM_C(r,2,1)$ is identified with
the Springer resolution $\widetilde\CS(2r-2,1,1)\to\CS(2r-2,1,1)$. The centralizer
$Z_{Sp(2r)}(e)$ contains a subgroup $SL(2)$, and its Cartan torus acts on the Springer fiber
with $2r$ fixed points. Furthermore, we consider the flavor deformation $\ul\CM{}_C(r,2,1)$
of $\CM_C(r,2,1)$ over $\fh=\on{Lie}T_F=\BC z_1\oplus\ldots\oplus\BC z_r$ for the maximal torus
flavor group. We identify $\ul\CM{}_C(r,2,1)$ with a ramified cover $\ul\BS(2r-2,1,1)$ of the full
Slodowy slice $\BS(2r-2,1,1)\cong\BA^{r+4}$ obtained by identifying the spectra
of matrices in $\BS(2r-2,1,1)$ with $\{\pm z_1,\ldots,\pm z_r\}$ (so that $\fh$ is identified
with a Cartan Lie subalgebra of $\fsp(2r)$), see~Proposition~\ref{flavor}.
It seems likely that the quantized flavor deformation of the Coulomb branch is isomorphic to
the central extension of scalars (from $\BC[\fh/\BW]$ to $\BC[\fh]$) of the finite $\mathcal W$-algebra
$U(\fsp(2r),e)$.

Our proof is purely computational: we identify on the nose the equations defining $\CM(r,2,1)$
and $\CS(2r-2,1,1)$, cf.~\cite{y} for a general classification result underlying this identification.
Note that the symplectic resolution $\widetilde\CM_H(r,2,m)$ for
arbitrary dimension of the framing $W=\BC^m$ also has finitely many $\BC^\times$-fixed
points, and this is still true for $\widetilde\CM_H(r,3,m)$ if we increase the dimension
of $V$ to 3 (I.~Losev, private communication). Hence the corresponding Coulomb branches
are also expected to possess symplectic resolutions $\widetilde\CM_C(r,2,m),\widetilde\CM_C(r,3,m)$
with finitely many
$\BC^\times$-fixed points. However, we were unable to find such resolutions, see some
computations in~Section~\ref{3d}.

\subsection{Acknowledgments}
We are grateful to Ivan Losev for posing the problem and for the useful discussions.
We are also obliged to Travis Schedler for bringing~\cite{hm} to our attention.
E.G.~is thankful to Ivan Chistyakov for help with the software.
M.F.\ was partially funded within the framework of the HSE University Basic Research Program
    and the Russian Academic Excellence Project `5-100'.

\section{Coulomb branch and Slodowy slice}

\subsection{Notations and the main result}
\label{notations}
We consider the representation space $\bN$ of the quiver with a single vertex,
$r$ loops, $V=\BC^2$, and the framing $W=\BC^1$. In other words, the gauge group $G=GL(V)=GL(2)$,
and $\bN=\fgl(V)^{\oplus r}\oplus V$. We denote the corresponding Coulomb branch~\cite{main} by
$\CM_C(r,2,1)$. It is an affine conical 4-dimensional Poisson variety equipped with
a Hamiltonian action of $SL(2)$, to be defined below.

We also consider a nilpotent element $e\in\fsp(2r)$ of Jordan type $(2r-2,1,1)$.
Its adjoint orbit has codimension 4 in the nilpotent cone $\CN_{\fsp(2r)}$
(see e.g.~\cite{KP}). We denote the corresponding Slodowy slice~\cite{Slodowy} in $\CN_{\fsp(2r)}$ by
$\CS(2r-2,1,1)$. It is an affine conical 4-dimensional Poisson variety. Since the
centralizer of $e$ in $Sp(2r)$ contains a subgroup $SL(2)$, the slice $\CS(2r-2,1,1)$
is equipped with a Hamiltonian action of $SL(2)$.

\begin{theorem}
  \label{isomo}
  There exists an $SL(2)$-equivariant Poisson isomorphism
  \[\Phi\colon\CM_C(r,2,1)\xrightarrow{\sim}\CS(2r-2,1,1).\]
  \end{theorem}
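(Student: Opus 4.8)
The plan is to produce explicit presentations of the two coordinate rings $\BC[\CM_C(r,2,1)]$ and $\BC[\CS(2r-2,1,1)]$ by generators and relations, to check that the presentations literally agree, and then to verify that the resulting isomorphism intertwines the $SL(2)$-actions and the Poisson brackets. In both cases the answer should be a hypersurface in $\BA^5$ with coordinates $\se,\sh,\ssf$ of degree $2$ spanning the Lie algebra of the acting $SL(2)$ and coordinates $\sx,\sy$ of degree $2r-1$ spanning its $2$-dimensional standard representation, cut out by the unique (up to scale) $SL(2)$-invariant relation of degree $4r$,
\[\se\sy^2-2\sh\,\sx\sy+\ssf\sx^2=c\,(\sh^2-4\se\ssf)^r,\]
for a suitable nonzero constant $c$, which can then be normalised to $1$ by rescaling $\sx,\sy$. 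Producing the two presentations is the whole content of the theorem; everything else is formal.

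\textbf{The Coulomb branch side.} I would compute $\BC[\CM_C(r,2,1)]$ from the BFN construction of~\cite{main} by abelianisation with respect to the diagonal torus $T=(\BC^\times)^2\subset GL(2)$ and the residual action of the Weyl group $\fS_2$. With $w_1,w_2$ the coordinates on $\Lie T$, the weights of $\bN=\fgl(2)^{\oplus r}\oplus V$ restricted to $T$ are $r$ copies of $\{0,0,\pm(w_1-w_2)\}$ together with $\{w_1,w_2\}$, and the positive root is $w_1-w_2$. From the abelianised relations $u_\mu u_{-\mu}=\prod_\chi\chi^{|\langle\mu,\chi\rangle|}$ one gets $u_{(1,0)}u_{(-1,0)}=\pm(w_1-w_2)^{2r}w_1$, $u_{(0,1)}u_{(0,-1)}=\pm(w_1-w_2)^{2r}w_2$, and $u_{(1,1)}u_{(-1,-1)}=w_1w_2$. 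Passing to $GL(2)$, the candidate generators are the Casimirs $\sigma_1=w_1+w_2$, $\sigma_2=w_1w_2$ and the (dressed) monopole operators for the fundamental dominant coweights $(1,1),(-1,-1),(1,0),(0,-1)$. The key points to establish are: $\sigma_2$ equals $M_{(1,1)}M_{(-1,-1)}$ and is therefore redundant; the dressed monopoles of degree $2r+1$ are recovered from $\sigma_1,\sigma_2$ and the undressed ones through $M_{(1,0)}M_{(-1,-1)}$ and $M_{(0,-1)}M_{(1,1)}$; and the remaining five generators $\se=M_{(1,1)}$, $\ssf=M_{(-1,-1)}$, $\sh=\sigma_1$, $\sx=M_{(1,0)}$, $\sy=M_{(0,-1)}$ satisfy exactly one relation, read off after clearing the $(w_1-w_2)^{-1}$ denominators introduced by abelianisation, which has the displayed degree-$4r$ shape. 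Here $\se,\sh,\ssf$ are the components of the moment map for the Hamiltonian $SL(2)$, while $\sx,\sy$ transform in its standard representation because $(1,0)$ and $(0,-1)$ carry opposite topological charge.

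\textbf{The Slodowy slice side.} I would then fix a matrix realisation of $\fsp(2r)$ and an $\mathfrak{sl}_2$-triple $(e,h,f)$ with $e$ of Jordan type $(2r-2,1,1)$, chosen so that $\BC^{2r}=L_{2r-3}\oplus\BC^2$ as an $\mathfrak{sl}_2$-module (writing $L_m$ for the irreducible of highest weight $m$), with $\BC^2$ a symplectic subspace; then $\Sym^2\BC^2=\fsp(2)=\mathfrak{sl}_2$ centralises $e$ and is the Lie algebra of the relevant $SL(2)$. Decomposing $\fsp(2r)=\Sym^2(L_{2r-3}\oplus\BC^2)$ into $\mathfrak{sl}_2$-isotypic pieces exhibits the full slice $e+\mathfrak z(f)\cong\BA^{r+4}$ with $r-1$ coordinates of degrees $4,8,\dots,4r-4$ (the $\Sym^2 L_{2r-3}$ part), two coordinates of degree $2r-1$ (the $L_{2r-3}\otimes\BC^2$ part), and three coordinates of degree $2$ (the $\mathfrak{sl}_2$ part, which is the $SL(2)$ moment map). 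Imposing the vanishing of the fundamental invariants $\tr(x^2),\dots,\tr(x^{2r})$, of degrees $4,8,\dots,4r$: in the first $r-1$ of them the degree-$4k$ coordinate occurs linearly, with leading coefficient $\tr(e^{2k-1}v)\ne 0$ ($v$ the corresponding lowest weight vector) and the remaining terms involving only lower coordinates, so by induction all $\Sym^2 L_{2r-3}$-coordinates become polynomials in $\se,\sh,\ssf$; after this substitution the last invariant $\tr(x^{2r})$ becomes a single relation among $\se,\sh,\ssf,\sx,\sy$ of degree $4r$, necessarily of the displayed form because the ring of $SL(2)$-invariants on $\BA^5$ is the polynomial ring on the Casimir $\sh^2-4\se\ssf$ (degree $4$) and on $E=\se\sy^2-2\sh\sx\sy+\ssf\sx^2$ (degree $4r$), so that the degree-$4r$ invariants form the $2$-dimensional span of $(\sh^2-4\se\ssf)^r$ and $E$.

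\textbf{Matching and the main difficulty.} Comparing the two presentations yields $\Phi$; it is bijective because both rings are presented identically, and $SL(2)$-equivariant because on each side the $SL(2)$-action is recovered from the moment map $(\se,\sh,\ssf)$ and the standard action on $(\sx,\sy)$. For the Poisson statement I would compute the brackets of the five generators on both sides --- on $\CM_C$ from the BFN bracket, on $\CS$ from the restriction of the Kirillov--Kostant--Souriau bracket to the transverse slice --- and check agreement; alternatively, since both are conical symplectic singularities, a Poisson bracket of degree $-2$ compatible with the given grading and $SL(2)$-action is unique up to scale, so matching the varieties together with a single nonzero bracket suffices. The main obstacle is the Coulomb-branch computation: one must evaluate the abelianised monopole products and the non-abelian relation in closed form, prove that $\sigma_2$ and the higher dressed monopoles really are redundant, and obtain the single relation in the displayed shape uniformly in $r$ (the classification of~\cite{y} can serve as a consistency check); matching the constant $c$ with the analogous constant extracted from $\tr(x^{2r})|_{\CS}$, and checking $c\ne 0$ on both sides, is the last point. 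Finally, carrying out the same elimination over $\fh=\BC z_1\oplus\dots\oplus\BC z_r$, with the invariants set equal to the elementary symmetric functions of $z_1^2,\dots,z_r^2$ rather than to $0$, upgrades $\Phi$ to the isomorphism of Proposition~\ref{flavor}.
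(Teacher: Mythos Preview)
Your proposal follows essentially the same strategy as the paper: present both coordinate rings as hypersurfaces in $\BA^5$ with generators of degrees $2,2,2,2r-1,2r-1$ and a single degree-$4r$ relation, match the presentations, then check the Poisson brackets. The paper proves exactly this relation (Lemma~\ref{relatio}) using the same abelianisation/monopole setup you describe, citing \cite{we} for the generating set and a dimension argument for completeness of the relation; on the Slodowy side it writes down explicit $2r\times 2r$ matrices for $e,f,h,\Omega$ and $A\in e+Z_{\fsp(2r)}f$, eliminates the $b_i$ via $\tr A^{2k}$ for $k<r$, and reads off the final relation from $\det A$.

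Where you genuinely differ is in the Slodowy computation: instead of explicit matrices, you use the $\mathfrak{sl}_2$-isotypic decomposition of $\fsp(2r)=\Sym^2(L_{2r-3}\oplus\BC^2)$ to see the coordinate degrees a priori, eliminate the $\Sym^2 L_{2r-3}$-coordinates via the nonvanishing of $\tr(e^{2k-1}v)$, and then pin down the final relation by the $SL(2)$-invariant-theory observation that the degree-$4r$ invariants on $\mathfrak{sl}_2\oplus\BC^2$ are spanned by the Casimir$^r$ and your $E$. This is cleaner than the paper's matrix determinant computation and explains \emph{why} the relation has the shape it does. The cost is that you still owe two nonvanishing checks (the leading coefficients $\tr(e^{2k-1}v)$ and the $E$-coefficient in $\tr(x^{2r})$ after substitution), which the paper gets for free from its explicit formulas. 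Your remark that the Poisson bracket of degree $-2$ is unique up to scale is plausible but not obviously justified; the paper (and your primary plan) simply computes the brackets of the generators directly, which is safe.
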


The proof is given in Section~\ref{coda} after a neccesary preparation.

\subsection{Equations for the Coulomb branch}
\label{coulomb}
We use the notations of~\cite[Appendix A]{quiver}, especially~\cite[A(iii),~(A.7)]{quiver}.
Thus we have rational \'etale coordinates $u_1,u_2,w_1,w_2$ on $\CM_C(r,2,1)$, and the regular
functions
\[E_1[1]= (w_1 - w_2)^{r-1} (u_1 - (-1)^r u_2),\ E_1[w] = (w_1 - w_2)^{r-1} (w_1u_1 - (-1)^r w_2u_2),\
E_2[1] = u_1 u_2,\]
\[F_1[1] = (w_1 - w_2)^{r-1} (w_1u_1^{-1} - (-1)^r w_2 u_2^{-1}),\
F_1[w] = (w_1 - w_2)^{r-1} (w_1^2 u_1^{-1} - (-1)^r w_2^2 u_2^{-1}),\ F_2[1] = w_1 w_2 u_1^{-1}u_2 ^{-1}.\]
These functions along with regular functions $w_1 + w_2,\ w_1w_2$ generate the Coulomb branch ring,
see~\cite[Proposition~3.1]{we}.
In fact, these 8 generators are redundant since \[w_1 w_2 = E_2[1]F_2[1],\
E_1[w] = (w_1 + w_2)E_1 [1]- (-1)^rE_2[1]F_1[1],\ F_1[w] = (w_1 + w_2)F_1[1] - (-1)^rF_2[1]E_1[1].\]
Thus we have 5 generators \[x_1:=E_1[1],\ x_2:=(-1)^rE_2[1],\ y_1:=F_1[1],\ y_2:=(-1)^rF_2[1],\
w:=w_1 + w_2.\]
The following lemma goes back to~\cite[7.2]{hm}.

\begin{lemma}
  \label{relatio}
  We have
  \begin{equation}
    \label{starlet}
    \BC[\CM_C(r,2,1)]=\BC[x_1,x_2,y_1,y_2,w]/\big((w^2-4x_2y_2)^r-(x_1^2y_2+x_2y_1^2+wx_1y_1)\big).
  \end{equation}
\end{lemma}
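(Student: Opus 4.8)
The plan is to exhibit the ring on the right-hand side of~\eqref{starlet} as the image of the Coulomb branch ring under the map sending the abstract generators to the explicit rational functions $x_1=E_1[1]$, $x_2=(-1)^rE_2[1]$, $y_1=F_1[1]$, $y_2=(-1)^rF_2[1]$, $w=w_1+w_2$, and then to check that the kernel is exactly the principal ideal generated by $(w^2-4x_2y_2)^r-(x_1^2y_2+x_2y_1^2+wx_1y_1)$. The surjectivity onto $\BC[\CM_C(r,2,1)]$ is already granted by the discussion preceding the lemma (the eight generators of~\cite[Proposition~3.1]{we} reduce to these five). So the content is the description of the ideal of relations.

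First I would verify that the displayed polynomial actually vanishes on the Coulomb branch, i.e.\ that substituting the rational functions in $u_1,u_2,w_1,w_2$ makes it identically zero. This is a direct computation: one has $w^2-4x_2y_2=(w_1+w_2)^2-4w_1w_2=(w_1-w_2)^2$, so $(w^2-4x_2y_2)^r=(w_1-w_2)^{2r}$; on the other hand, expanding $x_1^2y_2+x_2y_1^2+wx_1y_1$ and using $x_2y_1=(w_1-w_2)^{r-1}((-1)^ru_1u_2)(w_1u_1^{-1}-(-1)^rw_2u_2^{-1})$ etc., everything collapses to $(w_1-w_2)^{2r-2}$ times a quadratic expression in $u_1,u_2,w_1,w_2$ that factors as $(w_1-w_2)^2$. (This is the $7.2$ computation of~\cite{hm} referred to in the statement.) Thus the principal ideal is contained in the kernel, giving a surjection $\BC[x_1,x_2,y_1,y_2,w]/\big((w^2-4x_2y_2)^r-(x_1^2y_2+x_2y_1^2+wx_1y_1)\big)\twoheadrightarrow\BC[\CM_C(r,2,1)]$.

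It remains to show this surjection is injective, equivalently that the hypersurface ring on the left is already reduced and of the right size — it has Krull dimension $4$, matching $\dim\CM_C(r,2,1)=4$. I would argue that the polynomial $(w^2-4x_2y_2)^r-(x_1^2y_2+x_2y_1^2+wx_1y_1)$ is irreducible in $\BC[x_1,x_2,y_1,y_2,w]$: viewed as a polynomial in $w$ of degree $2r$ with leading coefficient $1$, its constant and low-order terms are visibly not proportional to powers of a single linear form, and one can check irreducibility by specializing the remaining variables (e.g.\ set $x_2=y_2=0$ to get $-x_1^2\cdot 0-\ldots$; better, use a generic specialization and an Eisenstein-type or Newton-polygon argument at a suitable valuation). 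Once the polynomial is irreducible, the hypersurface ring is a domain of dimension $4$, and the surjection onto the $4$-dimensional domain $\BC[\CM_C(r,2,1)]$ (a Coulomb branch ring is known to be an integral domain) between domains of the same dimension is an isomorphism, since a surjection of integral domains with equal Krull dimension and one Noetherian cannot have nonzero kernel without dropping dimension.

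The main obstacle I anticipate is the irreducibility of the defining polynomial: the appearance of the $r$-th power $(w^2-4x_2y_2)^r$ invites a factorization over the locus $w^2=4x_2y_2$, so a careless argument could fail. I would handle this by treating $w^2-4x_2y_2=:t$ as a coordinate (after the change of variables this is a smooth quadric in $x_2,y_2,w$) and observing that $t^r-(x_1^2y_2+x_2y_1^2+wx_1y_1)$ is linear, hence obviously irreducible, in the variable $x_1^2y_2$ — more precisely, working in the localization where $y_2$ is invertible it is a monic quadratic in $x_1$ with discriminant a nonsquare, and one then checks the complement $\{y_2=0\}$ does not swallow a component by a dimension count. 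An alternative, cleaner route, if available from the cited works, is simply to invoke that $\BC[\CM_C(r,2,1)]$ is a normal domain of dimension $4$ and that the hypersurface is normal (it is a complete intersection, regular in codimension $1$ by a Jacobian-criterion check on the singular locus), after which the surjection between normal domains of the same dimension inducing a birational map is forced to be an isomorphism.
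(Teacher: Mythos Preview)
Your approach is correct and matches the paper's primary argument: verify the relation directly in the rational \'etale coordinates, then use the dimension count $\dim\CM_C(r,2,1)=4$ to conclude that the surjection from the hypersurface ring is injective. The paper's write-up of this step is terser---it simply says any further relation would force $\dim\CM_C(r,2,1)\leq 3$---and does not explicitly address the irreducibility of the defining polynomial, whereas you correctly identify this as a potential gap and sketch how to close it (quadratic in $x_1$ after inverting $y_2$, with nonsquare discriminant $(w^2-4x_2y_2)\big(y_1^2+4y_2(w^2-4x_2y_2)^{r-1}\big)$).

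The one thing the paper does that you do not is a Hilbert series comparison: it computes the graded Hilbert series of $\BC[\CM_C(r,2,1)]$ via the monopole formula of~\cite{main} and checks that it equals $(1-t^{4r})/\big((1-t^2)^3(1-t^{2r-1})^2\big)$, the Hilbert series of the hypersurface ring in the grading $\deg x_1=\deg y_1=2r-1$, $\deg x_2=\deg y_2=\deg w=2$. Since a graded surjection between $\BN$-graded rings with equal Hilbert series is an isomorphism, this gives an independent and self-contained proof of injectivity that sidesteps the irreducibility discussion altogether. Your route is more algebraic and hands-on; the paper's Hilbert series route is cleaner once the monopole formula is available.
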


\begin{proof}
  The verification of the relation~(\ref{starlet}) is straightforward.
  It remains to check that the resulting surjective
  homomorphism from the RHS to the LHS is injective. If there were some additional relations,
  they would imply $\dim\CM_C(r,2,1)\leq 3$ which is nonsense.

  This finishes the proof, but is also instructive to compare the Hilbert series of the LHS
  and RHS in the natural grading where $\deg x_1=\deg y_1=2r-1,\ \deg x_2=\deg y_2=\deg w=2$.
  The Hilbert series of the RHS clearly is
  \[H(t) = \frac{1 - t^{4r}}{(1-t^2)^3(1-t^{2r-1})^2}.\]
  To compute the Hilbert series of the LHS we use the monopole formula~\cite[2(iii)]{main}.
  The dominant coweights of $GL(2)$ are the pairs of integers $\lambda = (n_1 \geq n_2)$.
  In notations of~\cite[2(iii)]{main} we have $P_G(t; \lambda) = (1 - t^2)^{-2}$ if $n_1 > n_2$
  and $P_G(t; \lambda) = (1 - t^2)^{-1}(1 - t^4)^{-1}$ if $n:=n_1=n_2$.
  The multiset of weights $\chi$ of $\mathbf{N}= \mathfrak{gl}(2)^{\oplus r} \oplus V$ is
  $\{(1, 0)\}\cup\{(0, 1)\}\cup r \cdot\{(1, -1),\ (-1, 1)\}\cup2r\cdot\{(0,0)\}$, and for
  $\chi = (k_1, k_2)$ and $\lambda = (n_1 \geq n_2)$ we have
  $\langle \chi, \lambda \rangle = n_1k_1 + n_2k_2$. Hence the Hilbert series of the LHS is
  \[(1 -t^2)^{-2} \sum_{n_1 > n_2 \in \mathbb{Z}} t^{(2r -2)(n_1 -n_2) + | n_1 | + |n_2|} + (1 - t^2)^{-1}(1 - t^4)^{-1} \sum_{n \in \mathbb{Z}} t^{2|n|}.\]
  The second sum splits into two summands according to $ n \geq 0$ and $n < 0$ equal to $\frac{1}{(1 - t^2)^2 (1-t^4)}$ and $\frac {t^2}{(1 - t^2)^2 (1-t^4)} $ respectively. The first sum splits into three summands according to $n_1 > n_2 \geq 0$, $0 \geq n_1 > n_2$ and $n_1 > 0 > n_2$ equal to $\frac{t^{2r - 1}}{(1 - t^2)^3 (1 - t^{2r - 1})}$, $\frac{t^{2r - 1}}{(1 - t^2)^3 (1 - t^{2r - 1})}$ and $\frac{t^{2(2r - 1)}}{(1 - t^2)^2 (1 - t^{2r - 1})^2}$ respectively. We get
\begin{multline*}\frac{1}{(1 - t^2)^2 (1-t^4)} + \frac {t^2}{(1 - t^2)^2 (1-t^4)} + \frac{2t^{2r - 1}}{(1 - t^2)^3 (1 - t^{2r - 1})} + \frac{t^{2(2r - 1)}}{(1 - t^2)^2 (1 - t^{2r - 1})^2}=
  \frac{1 - t^{4r}}{(1-t^2)^3(1-t^{2r-1})^2}=H(t).
  \end{multline*}
  \end{proof}

\begin{remark}
  The grading used in the proof of~Lemma~\ref{relatio} gives rise to the following
  contracting $\BC^\times$-action on $\CM_C(r,2,1)$:
  \[c\cdot(x_1, x_2, y_1, y_2, w)=(c^{2r - 1}x_1,\ c^2x_2,\ c^{2r - 1}y_1,\ c^2y_2,\ c^2 w).\]
\end{remark}

\begin{remark}
  Let \[\Omega=\begin{pmatrix} 0 & -1 \\ 1 & 0 \end{pmatrix},\
  N = \begin{pmatrix} -x_2 & \frac{w}{2} \\ \frac{w}{2} & -y_2 \end{pmatrix},\
  A = \begin{pmatrix} x_1 \\ y_1 \end{pmatrix}.\]
  Then~(\ref{starlet}) is equivalent to
  \begin{equation}
    \label{hanany}\tr ((N\Omega)^{2r})=A^{T}\Omega N \Omega A.
  \end{equation}
  up to rescaling $x_1\leadsto 4^rx_1/\sqrt{2},\ y_1\leadsto 4^ry_1/\sqrt{2}.$
  This equation appears for $r=2,3$ in~\cite{Form} for the Slodowy slice
  $\CS(2r-2,1,1)\subset\CN_{\fsp(2r)}$.
  
  \medskip

  We define the action of $SL(2)$ on $\CM_C(r,2,1)$ as follows:
for $S\in SL(2)$ we set
\[S(A)=SA \ (\text{so that}\ A^{T} \mapsto A^{T}S^{T}),\ S(N)=SNS^{T}.\]
Clearly, this action preserves the equation~(\ref{hanany}).
\end{remark}

\subsection{Equations for the Slodowy slice}
\label{slodowy}
All of the matrices in what follows have size $2r \times 2r$ and all the blank spaces are zeroes. 

The matrix 
\[e = \begin{pmatrix} 0 & \ \ 1 \ \ \ & 0 & \cdots & 0 & 0 & 0 \\[6pt] \vdots & \ddots & \ddots & \ddots & \vdots & \vdots & \vdots \\ \vdots & & \ddots & \ddots & 0 & \vdots & \vdots \\ \vdots &  & & \ddots & 2r - 3 & 0 & 0  \\[6pt] 0 & \cdots & \cdots & \cdots & 0 & 0 & 0 \\[6pt] 0 & & \cdots \cdots & \cdots & 0 & 0 & 0 \\[6pt] 0 & \cdots & \cdots & \cdots & 0 & 0 & 0 \end{pmatrix}\]
(right above the main diagonal we have $(1,2,\ldots,2r-4,2r-3,0,0)$)
is of Jordan type $(2r -2, 1, 1)$. Taking
\[f = \begin{pmatrix} 0 & \cdots & \cdots & 0 & 0 & 0 & 0 \\[6pt] 2r - 3 & \ddots & & \vdots & \vdots & \vdots & \vdots \\ 0 & \ddots & \ddots  & \vdots & \vdots & \vdots & \vdots \\[6pt] \vdots & \ddots & \ddots & 0 & \vdots & \vdots & \vdots \\[6pt] 0 & \cdots & 0 & \ \ 1 \ \ \ & 0 & 0 & 0 \\[6pt] 0 &\cdots & \cdots & 0 & 0 & 0 & 0 \\[6pt] 0 &\cdots & \cdots & 0 & 0 & 0 & 0 \end{pmatrix}, \ 
h = \begin{pmatrix} 2r - 3 & 0 & \cdots  & \cdots  & \cdots & 0 & 0 & 0 \\ 0 & \ddots & \ddots & & & \vdots & \vdots & \vdots \\ \vdots & \ddots & 1 & \ddots & & \vdots & \vdots \\[6pt] \vdots & & \ddots & -1 & \ddots & \vdots & \vdots & \vdots \\[6pt] \vdots & & & \ddots & \ddots & 0 & \vdots & \vdots \\[6pt] 0 & \cdots & \cdots & \cdots & 0 & 3-2r & 0 & 0 \\[6pt] 0 & \cdots & \cdots & \cdots & \cdots & 0 & 0 & 0 \\[6pt] 0 & \cdots & \cdots & \cdots & \cdots & 0 & 0 & 0 \end{pmatrix},\]
(right below the main diagonal of $f$ we have $(2r-3,2r-4,\ldots,2,1,0,0)$, and at the main
diagonal of $h$ we have $(2r-3,2r-5,\ldots,5-2r,3-2r,0,0)$)
we see that $\{ e, f, h \}$ is an $\mathfrak{sl}_2$-triple in $\mathfrak{sp}(2r)$, where
$\fsp(2r)$ is the Lie algebra formed by all the matrices preserving the skew-symmetric bilinear
form with the matrix
\[\Omega = \begin{pmatrix} 0 & \cdots & \cdots & \cdots & \cdots & \cdots & 0 & -\binom{2r-3}{0}^{-1} & 0 & 0 \\[6pt] \vdots & & & & &  \iddots & \ \ \binom{2r-3}{1}^{-1} & 0 & \vdots & \vdots \\[6pt] \vdots & & & &  \iddots & \iddots & \iddots & \vdots & \vdots & \vdots \\[6pt] \vdots & & &  \iddots & \pm \binom{2r-3}{r - 2}^{-1} & \iddots & & \vdots & \vdots & \vdots \\[6pt] \vdots & &   \iddots & \mp \binom{2r-3}{r - 1}^{-1} & \iddots & & & \vdots & \vdots & \vdots \\[6pt] \vdots & & \iddots &  \iddots & & & & \vdots & \vdots & \vdots \\[6pt] 0 & -\binom{2r-3}{2r - 2}^{-1} & \iddots & & & & & \vdots & \vdots & \vdots \\[6pt] \ \ \binom{2r-3}{2r-3}^{-1} & 0 & \cdots & \cdots & \cdots & \cdots & \cdots & 0 & 0 & 0 \\[6pt] 0 & \cdots & \cdots & \cdots & \cdots & \cdots & \cdots & 0 & 0 & -1 \\[6pt] 0 & \cdots & \cdots & \cdots & \cdots & \cdots & \cdots & 0 & 1 & 0 \end{pmatrix}\]
(where the coefficients in front of $\binom{2r-3}{r - 2}^{-1}$ and $\binom{2r-3}{r - 1}^{-1}$ are $(-1)^{r-1}$ and $(-1)^r$ respectively). 

The centralizer $Z_{\fsp(2r)}f$ is the set of matrices $A'$ in $\mathfrak{sp}(2r)$ such that
\[\begin{cases} \Omega A' + A'^T \Omega = 0 \\ fA' - A'f = 0. \end{cases}\]
Hence
\[\BS(2r-2,1,1):=e+Z_{\fsp(2r)}f=\left\{A(x_1,y_1,x_2,y_2,w,b_1,\ldots,b_{r-1})\right\}\cong\BA^{r+4},\]
where 
\begin{equation}
  \label{Amatrix}
  A =\begin{pmatrix}
\\ 0  \ \ \                 & 1                   & \ \ 0 \ \ \      & \cdots & \cdots & \cdots & \cdots &   \ \ \ \ 0 \ \ \ \ \ & 0 & 0  \\[6pt]
\binom{2r -3}{1}b_1 & \ddots               &  2      & \ddots &        &        &        & \vdots  & \vdots & \vdots \\[6pt]
\ \ 0 \ \ \                 & \binom{2r - 4}{1}b_1 & \ddots & \ddots & \ddots &        &        & \vdots & \vdots & \vdots \\[6pt]
\binom{2r -3}{3}b_2 & \ddots               & \ddots & \ddots & \ddots & \ddots &        & \vdots & \vdots & \vdots \\[6pt]
\vdots                 & \binom{2r -4}{3}b_2   & \ddots & \ddots & \ddots & \ddots & \ddots & \vdots  & \vdots & \vdots \\[6pt]
\vdots              &              & \ddots & \ddots & \ddots & \ddots & \ddots & \ \ \ \ 0 \ \ \ \ \ & \vdots & \vdots \\[6pt]
\vdots               &                &      & \ddots & \ddots & \ddots & \ddots & 2r -3 & 0 & 0 \\[6pt]
\binom{2r - 3}{2r - 3}b_{r - 1} & \cdots      & \cdots & \cdots    & \ \ \binom{3}{3}b_{2} \ \ \ & \ \ 0 \ \ \ & \ \ \binom{1}{1}b_{1} \ \ \ & \ \ \ 0 \ \ \ \ & cy_1 & cx_1 \\[6pt]
-cx_1 & 0 & \cdots & \cdots & \cdots & \cdots & \cdots & 0 & w & -2x_2 \\[6pt]
 cy_1 & 0 & \cdots & \cdots & \cdots & \cdots & \cdots & 0 & 2y_2 & -w
  \end{pmatrix},
\end{equation}
and $c=\frac{1}{\sqrt{2\cdot(2r-3)!}}$.
To get the equations of the Slodowy slice $\mathcal{S}(2r-2,1,1)$ we need to intersect
$\BS(2r-2,1,1)$ with the nilpotent cone $\CN_{\fsp(2r)}$,
that is to impose the relations that the traces of all
the powers of $A \in\BS(2r-2,1,1)$ are zero.
Computing $\tr A^2, \tr A^4, \cdots, \tr A^{2r-2}$ gives recursively
$b_i = \alpha_i (w^2-4x_2y_2)^i, i = 1, 2,\ldots,r-1$ for some nonzero constants $\alpha_i$. So we have 5 generators $(x_1,y_1,x_2,y_2,w)$ and to get the only relation we calculate the determinant of $A$. 
Consider the general formula for the determinant
$\det A = \sum_{\sigma} (-1)^\sigma a_{1, \sigma(1)} \cdots a_{2r, \sigma(2r)}$
(where $a_{i, j}$ are the matrix elements of $A$) and note that for a permutation $\sigma$ such that $\{ 2r-1, 2r \}$ is not invariant under $\sigma$ to define a nonzero term in the sum one needs to have $\sigma(i) = i + 1$ for $1 \leq i \leq 2r-3$. It follows that
\begin{equation}
  \label{detA}
  \det A=(x_1^2y_2+x_2y_1^2+wx_1y_1)-(w^2-4x_2y_2) \det B,
\end{equation}
where
\begin{equation}
  \label{Bmatrix}
  B =\begin{pmatrix}
\\ 0  \ \ \                 & 1                   & \ \ 0 \ \ \      & \cdots & \cdots & \cdots & \cdots &   \ \ \ \ 0 \ \ \ \ \   \\[6pt]
\binom{2r -3}{1}b_1 & \ddots               &  2      & \ddots &        &        &        & \vdots \\[6pt]
\ \ 0 \ \ \                 & \binom{2r - 4}{1}b_1 & \ddots & \ddots & \ddots &        &        & \vdots \\[6pt]
\binom{2r -3}{3}b_2 & \ddots               & \ddots & \ddots & \ddots & \ddots &        & \vdots \\[6pt]
\vdots                 & \binom{2r -4}{3}b_2   & \ddots & \ddots & \ddots & \ddots & \ddots & \vdots \\[6pt]
\vdots              &              & \ddots & \ddots & \ddots & \ddots & \ddots & \ \ \ \ 0 \ \ \ \ \ \\[6pt]
\vdots               &                &      & \ddots & \ddots & \ddots & \ddots & 2r -3 \\[6pt]
\binom{2r - 3}{2r - 3}b_{r - 1} & \cdots      & \cdots & \cdots    & \ \ \binom{3}{3}b_{2} \ \ \ & \ \ 0 \ \ \ & \ \ \binom{1}{1}b_{1} \ \ \ & \ \ \ 0 \ \ \ \
  \end{pmatrix}
  \end{equation}
is the matrix formed by the first $2r-2$ rows and $2r-2$ columns of $A$.

To compute the determinant of $B$ denote $w^2-4x_2y_2$ by $D$ and note that $B$ is of the form 

\[B =\begin{pmatrix}
\\ 0  \ \ \                 & 1                   & \ \ 0 \ \ \      & \cdots & \cdots & \cdots & \cdots &   \ \ \ \ 0 \ \ \ \ \   \\[6pt]
\beta_{2, 1} D & \ddots               &  2      & \ddots &        &        &        & \vdots \\[6pt]
\ \ 0 \ \ \                 & \beta_{3, 2} D & \ddots & \ddots & \ddots &        &        & \vdots \\[6pt]
\beta_{4, 1} D^2 & \ddots               & \ddots & \ddots & \ddots & \ddots &        & \vdots \\[6pt]
\vdots                 & \beta_{5, 2}D^2   & \ddots & \ddots & \ddots & \ddots & \ddots & \vdots \\[6pt]
\vdots              &              & \ddots & \ddots & \ddots & \ddots & \ddots & \ \ \ \ 0 \ \ \ \ \ \\[6pt]
\vdots               &                &      & \ddots & \ddots & \ddots & \ddots & 2r -3 \\[6pt]
\beta_{2r - 2, 1}D^{r - 1} & \cdots      & \cdots & \cdots    & \ \ \beta_{2r-2, 2r - 5} D^2 \ \ \ & \ \ 0 \ \ \ & \ \ \beta_{2r-2, 2r - 3} D \ \ \ & \ \ \ 0 \ \ \ \
\end{pmatrix}\] for nonzero $\beta_{i, j}$ determined by $\alpha_i$. Then it is easy to show
(e.g.\ by using the row formula repeatedly) that
\begin{equation}
  \label{detB}
  \det B=D^{r-1}=(w^2-4x_2y_2)^{r-1}.
\end{equation}
So we have
\[\det A=(x_1^2y_2+x_2y_1^2+wx_1y_1)-(w^2-4x_2y_2)^r=0.\]
Comparing this equation with~(\ref{starlet}) we have constructed the desired isomorphism
$\Phi$ of~Theorem~\ref{isomo}. It remains to check that $\Phi$ respects the natural Poisson
structures.


\subsection{Poisson structures}
\label{coda}
The generic symplectic form on $\CM_C(r,2,1)$ in the rational \'etale coordinates
$u_1,u_2,w_1,w_2$
of~Section~\ref{coulomb} is $\frac{du_1}{u_1} \wedge dw_1 + \frac{du_2}{u_2} \wedge dw_2$.
In the new variables $v_1:=w_1u_1^{-1},\ v_2:=w_2u_2^{-1}$, we can rewrite this as
$du_1 \wedge dv_1 + du_2 \wedge dv_2$. The corresponding Poisson bracket is
\[\{f, g \} = \left( \frac{\partial f}{\partial u_1} \frac{\partial g}{ \partial v_1} - \frac{\partial f}{\partial v_1} \frac{\partial g}{\partial u_1} \right) + \left( \frac{\partial f}{\partial u_2} \frac{\partial g}{ \partial v_2} - \frac{\partial f}{\partial v_2} \frac{\partial g}{\partial u_2} \right).\]
Recall that we have
\[x_2=(-1)^ru_1u_2, y_2=(-1)^rv_1v_2, w=u_1v_1+u_2v_2,\]
\[x_1=(u_1v_1-u_2v_2)^{r-1}(u_1-(-1)^ru_2), y_1=(u_1v_1-u_2v_2)^{r-1}(v_1-(-1)^rv_2).\]
So we can calculate
\begin{multline}
  \label{poifo}
  \{x_2,y_2\}=w,\ \{w,x_2\}=-2x_2,\ \{w,y_2\}=2y_2,\\
  \{y_2,x_1\}=y_1,\ \{y_2,y_1\}=0=\{x_2,x_1\},\
  \{x_2,y_1\}=-x_1,\ \{w,x_1\}=-x_1,\ \{w,y_1\}=y_1,
  \end{multline}
and $\{x_1,y_1\}$ is determined via the Jacobi identity.

In particular, $\se=(-1)^r\sqrt{-1}x_2,\ \ssf=(-1)^r\sqrt{-1}y_2,\ \sh=-w$
form the standard $\mathfrak{sl}_2$-basis of the vector space $\BC x_2\oplus\BC y_2\oplus\BC w$
of degree 2 generators of $\BC[\CM_C(r,2,1)]$. The vector space $\BC x_1\oplus\BC y_1$ of
degree $2r-1$ generators of $\BC[\CM_C(r,2,1)]$ forms an irreducible 2-dimensional
$\mathfrak{sl}_2$-module. So the Poisson action of $\mathfrak{sl}_2$ integrates to a
Hamiltonian action of $SL(2)$.

A straightforward verification shows that the formulas~(\ref{poifo}) hold true for the
Poisson structure on the Slodowy slice $\CS(2r-2,1,1)$, so that the isomorphism $\Phi$ respects
the Poisson structures. And the above Hamiltonian action of $SL(2)$ is nothing but the
action of $SL(2)\subset Z_{Sp(2r)}(e)$. Theorem~\ref{isomo} is proved. \hfill $\Box$

\section{Concluding remarks}

\subsection{Flavor symmetry}
\label{flavor sym}
We turn on the flavor symmetry $T_F=(\BC^\times)^r$ acting on $\bN=\fgl(V)^{\oplus r}\oplus V$ via
\[(c_1,\ldots,c_r)\cdot(\xi_1,\ldots,\xi_r,v)=(c_1\xi_1,\ldots,c_r\xi_r,v).\]
We denote the generators of $H^\bullet_{T_F}(\on{pt})$ by $z_1,\ldots,z_r$, so that
$H_{T_F}(\on{pt})=\BC[z_1,\ldots,z_r]$. We denote $\on{Spec}H^\bullet_{T_F}(\on{pt})=\on{Lie}T_F$ by
$\fh=\BC z_1\oplus\ldots\oplus\BC z_r$. We denote by $\ul\CM{}_C(r,2,1)$ the corresponding
flavor deformation of $\CM_C(r,2,1)$ over $\fh$.

We identify the diagonal Cartan Lie subalgebra of $\fsp(2r)$ with $\fh$ as follows:
\[(z_1,\ldots,z_r)\mapsto\on{diag}(z_1,\ldots,z_{r-1},-z_{r-1},\ldots,-z_1,z_r,-z_r).\]
The Weyl group $\BW\cong\fS_r\wr\{\pm1\}$ of $\fsp(2r)\supset\fh$ acts naturally on $\fh$, and
$\fh/\!\!/\BW\cong\fsp(2r)/\!\!/\!\on{Ad}_{Sp(2r)}$. Recall the full Slodowy slice
$\BS(2r-2,1,1)=e+Z_{\fsp(2r)}f\subset\fsp(2r)$, see~(\ref{Amatrix}). It projects to
$\fsp(2r)/\!\!/\!\on{Ad}_{Sp(2r)}$, and we define $\ul\BS(2r-2,1,1):=\fh\times_{\fh/\BW}\BS(2r-2,1,1)$.

\begin{prop}
  \label{flavor}
  The isomorphism $\Phi\colon \CM_C(r,2,1)\xrightarrow{\sim}\CS(2r-2,1,1)$ of~Theorem~\ref{isomo}
  extends to an isomorphism of the deformations over $\fh$:
  \[\ul{\Phi}\colon\ul\CM{}_C(r,2,1)\xrightarrow{\sim}\ul\BS(2r-2,1,1).\]
\end{prop}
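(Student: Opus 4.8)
The plan is to parallel the computation of Theorem~\ref{isomo}, but now keeping track of the flavor parameters $z_1,\ldots,z_r$ on both sides. On the Coulomb side, the flavor deformation $\ul\CM_C(r,2,1)$ is obtained by the standard mechanism: the monopole-type generators acquire $\fh$-dependent corrections, and concretely one expects the defining relation~(\ref{starlet}) to deform to
\[
\prod_{i=1}^{r}\bigl(w^2-4x_2y_2 - 4z_i^2\bigr)\;-\;\bigl(x_1^2y_2+x_2y_1^2+wx_1y_1\bigr)\;+\;(\text{lower-order $\fh$-corrections})\;=\;0,
\]
the point being that turning on $z_i$ shifts the ``eigenvalue-squared'' variable $w^2-4x_2y_2$ by the flavor masses. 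I would first write down explicitly the deformed generators $\ul E_i[\cdots],\ul F_i[\cdots]$ following~\cite[Appendix A]{quiver}, derive the deformed relation, and record how the five generators $x_1,x_2,y_1,y_2,w$ and the deformed relation specialize to~(\ref{starlet}) at $z=0$.

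On the Slodowy side, $\ul\BS(2r-2,1,1)=\fh\times_{\fh/\BW}\BS(2r-2,1,1)$ is cut out inside $\BS(2r-2,1,1)\times\fh$ by the equations saying that the characteristic polynomial of $A(x_1,y_1,x_2,y_2,w,b_1,\ldots,b_{r-1})$ equals $\prod_{i=1}^{r}(t^2-z_i^2)$. Under the identification of $\fh$ with the diagonal Cartan, the elementary symmetric functions of the $z_i^2$ are precisely the generators of $\BC[\fsp(2r)]^{Sp(2r)}=\BC[\fh]^{\BW}$, so this is exactly the fiber-product condition. The key computational step is to redo the trace calculation of Section~\ref{slodowy} with $\tr A^{2k}=$ (the $k$-th power-sum in the $z_i^2$) instead of $0$: this again determines $b_1,\ldots,b_{r-1}$ recursively, now as polynomials $b_i=\beta_i(w^2-4x_2y_2, z_1^2,\ldots,z_r^2)$ with $\beta_i(D,0,\ldots,0)=\alpha_i D^i$, and the final condition $\det A=\prod z_i^2$ becomes, by the same cofactor expansion leading to~(\ref{detA})--(\ref{detB}),
\[
(x_1^2y_2+x_2y_1^2+wx_1y_1)-(w^2-4x_2y_2)\det B(D,z_1^2,\ldots,z_r^2)=\prod_{i=1}^r z_i^2,
\]
with $\det B$ a deformation of $D^{r-1}$. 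I would then match this, term by term, with the deformed Coulomb relation from the first paragraph; that the two sides agree is forced because both are deformations, flat over $\fh$, of the same hypersurface~(\ref{starlet}), both have the same leading behavior in $D$, and the coefficients of the monomials in $z_i^2$ are determined by the requirement that the projection to $\fh/\BW$ be the characteristic-polynomial map.

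The main obstacle I anticipate is the bookkeeping on the Coulomb side: extracting the precise $\fh$-dependence of the deformed generators and the deformed relation from the abelianized/monopole formula of~\cite{quiver} requires care, since the naive guess ``replace $w^2-4x_2y_2$ by $\prod(w^2-4x_2y_2-4z_i^2)^{1/r}$'' is not literally meaningful and one must instead track how each $E_i[\cdot],F_i[\cdot]$ picks up symmetric-function corrections in the $z_i$. Once both deformed hypersurface equations are in hand, the identification $\ul\Phi$ is given by the same formulas as $\Phi$ (the variables $x_1,x_2,y_1,y_2,w$ are unchanged; only the relation deforms), and flatness over $\fh$ together with the $z=0$ specialization being $\Phi$ shows $\ul\Phi$ is an isomorphism. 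Finally I would remark that $\ul\Phi$ is automatically compatible with the projections to $\fh$ and, after the fact, note the compatibility with $\fh\to\fh/\BW\cong\fsp(2r)/\!\!/\mathrm{Ad}$, which is what identifies $\ul\CM_C(r,2,1)$ with the ramified cover $\ul\BS(2r-2,1,1)$ described in the introduction.
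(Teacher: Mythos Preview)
Your plan is essentially the paper's own proof: write the deformed monopole generators via~\cite[(A.7)]{quiver}, use graded Nakayama to see they still generate, derive the single deformed hypersurface relation, and on the Slodowy side impose $\on{Spec}(A)=\{\pm z_i\}$ to solve for the $b_i$ and compute $\det A$, then match the two equations. One caution: the claim that agreement is ``forced'' by flatness and leading behavior is not itself a proof (flat hypersurface deformations of~(\ref{starlet}) over $\fh$ are far from unique, and the Coulomb side has no a priori characteristic-polynomial map to $\fh/\BW$), so you really do need the explicit term-by-term match you describe---when you carry it out you will find the deformed relation is exactly $\prod_{i=1}^r(\sw^2-4\sx_2\sy_2-z_i^2)=\sx_1^2\sy_2+\sx_2\sy_1^2+\sw\sx_1\sy_1$, i.e.\ your guessed product (without the extra factor $4$) and with no further lower-order corrections, matching the paper's~(\ref{relati}).
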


\begin{proof}
  As in~Section~\ref{coulomb}, using the notations of~\cite[(A.7)]{quiver}, we have the
  following regular functions on $\ul\CM{}_C(r,2,1)$ expressed in terms of the rational
  \'etale coordinates $u_1,u_2,w_1,w_2$:
  \[\sx_1=u_1(w_1-w_2)^{-1}\prod_{i=1}^r(w_1-w_2-z_i)+u_2(w_2-w_1)^{-1}\prod_{i=1}^r(w_2-w_1-z_i),\]
  \[\sy_1=u_1^{-1}w_1(w_1-w_2)^{-1}\prod_{i=1}^r(w_1-w_2+z_i)+
  u_2^{-1}w_2(w_2-w_1)^{-1}\prod_{i=1}^r(w_2-w_1+z_i),\]
  \[\sx_2=(-1)^ru_1u_2,\ \sy_2=(-1)^rw_1w_2u_1^{-1}u_2^{-1},\ \sw=w_1+w_2.\]
  Since $\BC[\CM_C(r,2,1)]=\BC[\ul\CM{}_C(r,2,1)]/(z_1,\ldots,z_r)$ is generated by
  $x_1,x_2,y_1,y_2,w$, we conclude by the graded Nakayama Lemma that
  $\BC[\ul\CM{}_C(r,2,1)]$ is generated by $\sx_1,\sx_2,\sy_1,\sy_2,\sw,z_1,\ldots,z_r$.
  There must be exactly one relation, and one can check that
  \begin{equation}
    \label{relati}
    \BC[\ul\CM{}_C(r,2,1)]=\BC[\sx_1,\sx_2,\sy_1,\sy_2,\sw,z_1,\ldots,z_r]/\big(\sx_1^2\sy_2+
    \sx_2\sy_1^2+\sw\sx_1\sy_1-
 \sum_{\substack{m,n\in\BN\\ m+n\ \on{even}}}(-1)^{mn}\sigma_m\sigma_n(\sw^2-4\sx_2\sy_2)^{r-\frac{m+n}{2}}\big),
  \end{equation}
  where $\sigma_n$ is the $n$-th elementary symmetric polynomial in $z_1,\ldots,z_r$
  (in particular, $\sigma_0=1$, and $\sigma_k=0$ for $k>r$). Note that for any $k$, all the
  powers of $z_i$'s in $\sum_{m+n=2k}(-1)^{mn}\sigma_m\sigma_n$ are even.

  Now recall the matrix $B$ introduced in~(\ref{Bmatrix}) and the equality~(\ref{detA}).
  We assume that the spectrum of the matrix $A$ of~(\ref{Amatrix}) is
  $\{\pm z_1,\ldots,\pm z_r\}$, i.e.\ if $A$ is semisimple, then it is conjugate to
  $\on{diag}(z_1,\ldots,z_{r-1},-z_{r-1},\ldots,-z_1,z_r,-z_r)$. Hence~(\ref{detA}) is equivalent to
  \begin{equation}
    \label{detAB}
    (-1)^rz_1^2\cdots z_r^2=\sx_1^2\sy_2+\sx_2\sy_1^2+\sw\sx_1\sy_1-(\sw^2-4\sx_2\sy_2)\det B.
    \end{equation}
  Similarly to~(\ref{detB}), one can check
  \begin{equation}
    \label{detBB}
    \det B=\sum_{\substack{m,n\in\BN\\ 2r>m+n\
      \on{even}}}(-1)^{mn}\sigma_m\sigma_n(\sw^2-4\sx_2\sy_2)^{r-1-\frac{m+n}{2}}.
  \end{equation}
  Comparing~(\ref{detAB}) and~(\ref{detBB}) with~(\ref{relati}) we deduce the proposition.
\end{proof}

\subsection{Increasing $\dim V$}
\label{3d}
Let us now replace $V=\BC^2$ by $V'=\BC^3$ and consider the corresponding Coulomb branch
$\CM_C(r,3,1)$, a 6-dimensional affine conical Poisson variety. In view of~Theorem~\ref{isomo}
and judging by transversal singularities, it is tempting to hope that $\CM_C(r,3,1)$ might be
isomorphic to the Slodowy slice $\CS(2r-4,2,2)\subset\CN_{\fsp(2r)}$ for $r\geq3$.
We will show that this expectation is false by comparing the Hilbert series of $\CM_C(r,3,1)$
and $\CS(2r-4,2,2)$ already in the case $r=3$.

The Hilbert series of $\CM_C(r,3,1)$ is computed by the monopole formula just as in the
proof of~Lemma~\ref{relatio}.
The dominant coweights of $GL(3)$ are the triples of integers
$\lambda = (n_1 \geq n_2 \geq n_3)$. We have $P_G(t; \lambda) = (1 - t^2)^{-3}$ if
$n_1 > n_2 > n_3;\ P_G(t; \lambda) = (1 - t^2)^{-2}(1-t^4)^{-1}$ if $n_1=n_2>n_3$ or $n_1>n_2=n_3$,
and $P_G(t; \lambda) = (1 - t^2)^{-1}(1 - t^4)^{-1}(1-t^6)^{-1}$ if $n : = n_1 = n_2 = n_3$.
The multiset of weights $\chi$ of $\mathbf{N}=\mathfrak{gl}(3)^{\oplus r} \oplus V'$ is
$\{(1, 0, 0)\}\cup\{(0, 1, 0)\}\cup\{(0, 0, 1)\}\cup r\cdot\{ (1, -1, 0),\ (-1, 1, 0),\
(1, 0, -1),\ (-1, 0, 1),\ (0, 1, -1),\ (0, -1, 1)\}\cup3r\cdot\{(0, 0, 0)\}$.
For $\chi = (k_1, k_2, k_3)$ and $\lambda = (n_1 \geq n_2 \geq n_3)$ we have
$\langle \chi, \lambda \rangle = n_1k_1 + n_2k_2 + n_3k_3$. Hence
\begin{multline*}H'(t)=(1-t^2)^{-3}\sum_{n_1>n_2>n_3\in\mathbb{Z}}t^{(4r-4)(n_1-n_3)+|n_1|+|n_2|+|n_3|}\\
+(1-t^2)^{-2}(1-t^4)^{-1} \left( \sum_{n_1 > n_3 \in \mathbb{Z}} t^{(4r-4)(n_1-n_3) + 2|n_1| + |n_3|} + \sum_{n_1 > n_3 \in \mathbb{Z}} t^{(4r-4)(n_1-n_3) + |n_1| + 2|n_3|} \right)\\
+ (1-t^2)^{-1}(1-t^4)^{-1}(1-t^6)^{-1} \sum_{n \in \mathbb{Z}} t^{3 |n|}.
\end{multline*}
Now $\sum_{n \in \mathbb{Z}} t^{3 |n|}$ splits into two summands according to $ n \geq 0$ and $n < 0$
equal to $\frac{1}{1-t^3}$ and $\frac {t^3}{1-t^3} $ respectively. Furthermore,
$\sum_{n_1 > n_3 \in \mathbb{Z}} t^{(4r-4)(n_1-n_3) + |n_1| + 2|n_3|}$ splits into 3 summands according to
$n_1>n_3 \geq 0$, $0 \geq n_1 > n_3$ and $n_1 > 0 > n_3$ equal to
$\frac{t^{4r-3}}{(1-t^3)(1-t^{4r-3})}$, $\frac{t^{4r-2}}{(1-t^3)(1-t^{4r-2})}$ and
$\frac{t^{4r-3}t^{4r-2}}{(1-t^{4r-3})(1-t^{4r-2})}$ respectively.
Also, $\sum_{n_1 > n_3 \in \mathbb{Z}} t^{(4r-4)(n_1-n_3) + 2|n_1| + |n_3|}$ splits into 3 summands
according to $n_1>n_3 \geq 0$, $0 \geq n_1 > n_3$ and $n_1 > 0 > n_3$ equal to
$\frac{t^{4r-2}}{(1-t^3)(1-t^{4r-2})}$, $\frac{t^{4r-3}}{(1-t^3)(1-t^{4r-3})}$ and
$\frac{t^{4r-3}t^{4r-2}}{(1-t^{4r-3})(1-t^{4r-2})}$ respectively. Finally,
$\sum_{n_1>n_2 > n_3 \in \mathbb{Z}} t^{(4r-4)(n_1-n_3) + |n_1|+|n_2|+|n_3|}$ splits into 5 summands
according to $n_1 > n_2 > n_3 \geq 0$, $0 \geq n_1 > n_2 > n_3$, $n_1 > n_2 > 0> n_3$,
$n_1 > 0 > n_2 > n_3$ and $n_1 > (n_2 = 0) > n_3$ equal to
$\frac{t^{4r-3}t^{4r-2}}{(1-t^3)(1-t^{4r-3})(1-t^{4r-2})}$,
$\frac{t^{4r-3}t^{4r-2}}{(1-t^3)(1-t^{4r-3})(1-t^{4r-2})}$,
$\frac{t^{2(4r-3)}t^{4r-2}}{(1-t^{4r-3})^2 (1-t^{4r-2})}$,
$\frac{t^{2(4r-3)}t^{4r-2}}{(1-t^{4r-3})^2 (1-t^{4r-2})}$ and $\frac{t^{2(4r-3)}}{(1-t^{4r-3})^2}$
respectively.

Summing up we have \[H'(t) =\frac{(1-t^{4r})(1+ t^{4r-2} + 2t^{4r-1} + t^{4r} + t^{8r-2})}{(1-t^2)(1-t^3)^2 (1-t^4) (1-t^{4r-3})^2 (1-t^{4r-2})}.\]
We conclude that $\CM_C(r,3,1)$ is not a complete intersection. This is well known for the
Jordan quiver with $r=1$, when we have $\CM_C(1,3,1)\simeq\on{Sym}^3(\BA^2)$
(see e.g.~\cite[Proposition~3.24]{quiver}).

On the other hand, the Hilbert series of the Slodowy slices $\CS(2r-4,2,2)$ were computed
in~\cite[Tables~18,19]{Form} for $r=3,4$. The answers are

\[\frac{(1-t^8)(1-t^{12})}{(1-t^2)^3 (1-t^4)^5},\
\frac{(1-t^{12})(1-t^{16})}{(1-t^2) (1-t^4)^5 (1-t^6)^2}\] respectively.

One can similarly show that the Coulomb branch $\CM_C(r,2,m)$ for $m > 1$ is not a complete
intersection.

\end{document}